\newtheorem{theorem}{Theorem}
\newtheorem{corollary}[theorem]{Corollary}
\newtheorem{lemma}[theorem]{Lemma}
\newcommand{\R}{\mathbb{R}}
\begin{document}
\title[The Dirichlet problem on the slab]{The Dirichlet problem for the slab
with entire data and a difference equation for harmonic functions}

\author[D. Khavinson]{Dmitry Khavinson}
\address{
Dept. of Mathematics and Statistics\\
University of South Florida\\
Tampa, FL, USA}
\email{dkhavins@usf.edu}

\author[E. Lundberg]{Erik Lundberg}
\address{
Dept. of Mathematical Sciences\\
Florida Atlantic University\\
Boca Raton, FL, USA}
\email{elundber@fau.edu}

\author[H. Render]{Hermann Render}
\address{
School of Mathematics and Statistics\\
University College Dublin\\
Dublin, Ireland}
\email{hermann.render@ucd.ie}

\begin{abstract}
It is shown that the Dirichlet problem for the slab $(a,b) \times \mathbb{R}%
^{d}$ with entire boundary data has an entire solution. The proof is based
on a generalized Schwarz reflection principle. Moreover, it is shown that
 for a given entire harmonic function $g$
the inhomogeneous difference equation $h\left( t+1,y\right) -h\left(t,y\right) =g\left( t,y\right)$ 
has an entire harmonic solution $h$.

\smallskip

\noindent Math subject classification (2010): 31B20
\end{abstract}

\maketitle

\section{Introduction}

It is well known that the Dirichlet problem for \emph{unbounded} domains
differs in many respects from the case of bounded domains due to the
non-uniqueness of solutions. An excellent discussion of the Dirichlet
problem for general unbounded domains can be found in \cite{Gard93}.

Maybe the simplest example of this kind is the Dirichlet problem for the
strip $\left( a,b\right) \times \mathbb{R}$ which has been considered by
Widder in \cite{Widd60}, see also \cite{Dura03}. A discussion of the
Dirichlet problem for half-spaces can be found in \cite{Gard81}, \cite%
{SiTa96}, and for a cylinder in \cite{Miya96}.

In this paper we are concerned with the harmonic extendibility of the
solution of the Dirichlet problem for entire data on the slab (see \cite%
{Braw71}) 
\begin{equation*}
S_{a,b}:=\left( a,b\right) \times \mathbb{R}^{d}.
\end{equation*}

We say that a function $f:\mathbb{R}^{d}\rightarrow \mathbb{C}$ is entire if
there exists an analytic function $F:\mathbb{C}^{d}\rightarrow \mathbb{C}$
such that $F\left( x\right) =f\left( x\right)$ for all $x\in \mathbb{R}^{d}$%
. Thus, an entire function $f:\mathbb{R}^{d}\rightarrow \mathbb{C}$ is real
analytic, and it possesses an everywhere convergent power series expansion.
It is well known that every harmonic function $h:\mathbb{R}^{d}\rightarrow 
\mathbb{C}$ is entire.

Our first main result in this paper is the following:

\begin{theorem}
\label{thm:ext1}Let $h$ be a solution of the Dirichlet problem for the slab $%
S_{a,b}$ for entire data $f_{0}$, $f_{1}:\mathbb{R}^{d}\rightarrow \ \mathbb{%
C},$ i.e. $h$ is harmonic on $S$ and $\lim_{t\rightarrow a}h\left(
t,y\right) =f_{0}\left( y\right) $ and $\lim_{t\rightarrow b}h\left(
t,y\right) =f_{1}\left( y\right) .$ Then $h$ extends to all of $\mathbb{R}%
^{d+1}$ as a harmonic function.
\end{theorem}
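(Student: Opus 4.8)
The plan is to fill out $\mathbb{R}^{d+1}$ by reflecting $h$ repeatedly across the two bounding hyperplanes $\{t=a\}$ and $\{t=b\}$, where each reflection is the \emph{generalized} Schwarz reflection adapted to nonzero but entire boundary values. The engine of the whole argument is the following claim, which I would isolate as a lemma: every entire $f:\mathbb{R}^{d}\to\mathbb{C}$ is the restriction to a hyperplane $\{t=c\}$ of an entire harmonic function $G$ on $\mathbb{R}^{d+1}$ with $\partial _{t}G=0$ on $\{t=c\}$. I would produce $G$ by the Pizzetti-type series
\[
G\left( t,y\right) =\sum_{k=0}^{\infty }\frac{\left( -1\right) ^{k}\left( t-c\right) ^{2k}}{\left( 2k\right) !}\,\Delta _{y}^{k}f\left( y\right) ,
\]
which is formally harmonic and satisfies $G(c,\cdot )=f$. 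Convergence on all of $\mathbb{R}^{d+1}$ comes from a Cauchy estimate on the entire extension $F$ of $f$: taking a polydisc of radius $R$ about $y$ and using $\Delta ^{k}=\sum_{\left\vert \alpha \right\vert =k}\binom{k}{\alpha }\partial ^{2\alpha }$ together with $\left( 2\alpha \right) !\le \left( 2k\right) !$ gives $\left\vert \Delta _{y}^{k}f\left( y\right) \right\vert \le \left( 2k\right) !\,d^{k}R^{-2k}M$, where $M=\max_{\left\vert z\right\vert \le \left\vert y\right\vert +R\sqrt{d}}\left\vert F\left( z\right) \right\vert $. Hence the $k$-th term is bounded by $\left( d\left( t-c\right) ^{2}/R^{2}\right) ^{k}M$, and fixing any $R$ with $R^{2}>2d\left( t-c\right) ^{2}$ dominates the series by a convergent geometric series, locally uniformly; thus $G$ is entire and harmonic.

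With the lemma available, the generalized reflection across $\{t=c\}$ is immediate: if $w=h-G$ is harmonic on one side of the plane, continuous up to it, and vanishes there, then classical Schwarz reflection extends $w$ oddly, and since $G$ is harmonic on \emph{both} sides one obtains the harmonic continuation
\[
h\left( t,y\right) =G\left( t,y\right) +G\left( 2c-t,y\right) -h\left( 2c-t,y\right)
\]
to the reflected side. I would apply this first with $G=G_{b}$, the entire harmonic extension of $f_{1}$ across $\{t=b\}$, to extend $h$ harmonically to the slab $\left( b,2b-a\right) \times \mathbb{R}^{d}$, and with $G=G_{a}$, the extension of $f_{0}$, to reach $\left( 2a-b,a\right) \times \mathbb{R}^{d}$. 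The crucial point is that the new boundary value produced at $t=2b-a$ (resp.\ $t=2a-b$) is again entire, being a combination of the entire functions $G_{b}$, $G_{a}$ and previously constructed (entire) values; so the lemma applies afresh to these data and one reflects across $t=2b-a$, then $t=3b-2a$, and symmetrically to the left. Since the slabs $\left( a+n\left( b-a\right) ,\,a+\left( n+1\right) \left( b-a\right) \right) $ are disjoint and each reflection step glues harmonically across the crossed hyperplane, the construction yields a single-valued function that is harmonic across every plane $\{t=a+n\left( b-a\right) \}$ and hence on all of $\mathbb{R}^{d+1}$.

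The main obstacle is the extension lemma, and specifically its validity for entire data of \emph{arbitrary} growth rather than just finite exponential type. The resolution described above — that the radius $R$ in the Cauchy estimate may be chosen depending on the target compact set, so that the possibly very fast growth of $M$ is absorbed into a fixed constant while the geometric factor $d\left( t-c\right) ^{2}/R^{2}$ controls the decay in $k$ — is precisely what makes the series converge everywhere. Once this is secured, the generalized reflection identity and the inductive verification that the successive boundary data remain entire (and that $h$ stays continuous up to each face, so that classical Schwarz reflection applies) are routine, and the iterated reflection delivers the global harmonic extension.
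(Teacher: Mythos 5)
Your proposal is correct and follows essentially the same route as the paper: subtract an entire harmonic extension of the boundary datum that is even about the reflecting hyperplane, apply the classical odd Schwarz reflection to the difference, observe that the resulting boundary value on the next face is again entire, and iterate across the planes $t=a+n(b-a)$. The only difference is that where the paper invokes the Cauchy--Kovalevskaya theorem to produce the even entire harmonic extension $H$, you construct it explicitly as the series $\sum_{k\ge 0}(-1)^{k}(t-c)^{2k}\Delta_{y}^{k}f/(2k)!$ and verify convergence by Cauchy estimates, which amounts to a self-contained proof of the same fact.
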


A similar result holds for the Dirichlet problem for the ellipsoid: H.S.
Shapiro and the first author have established in \cite{KhSh92} that for each
entire data function there exists a solution of the Dirichlet problem which
extends to a harmonic function defined on $\mathbb{R}^{d}$, see also \cite%
{Armi04} for further extensions. 
For the case of a cylinder with ellipsoidal base it is not yet
known whether for any entire data function there exists an entire harmonic
solution, see \cite{KLR16} and  \cite{GaRe16} for partial results. We refer
the reader to a discussion in \cite{EKS05},  \cite{KhLu09}, \cite{Khav91}
and \cite{Rend08} regarding the question of which domains $\Omega $ allow
entire extensions for entire data.

From Theorem \ref{thm:ext1} we shall derive our second main result:

\begin{theorem}
If $g:\mathbb{R}\times \mathbb{R}^{d}\rightarrow \mathbb{C}$ is harmonic
then the difference equation 
\begin{equation}  \label{Diff}
h\left( t+1,y\right) -h\left( t,y\right) =g\left( t,y\right)
\end{equation}
has a harmonic solution $h:\mathbb{R}\times \mathbb{R}^{d}\rightarrow 
\mathbb{C}$.
\end{theorem}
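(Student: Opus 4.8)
The plan is to produce a single particular entire harmonic solution $h$; the general solution then differs by a harmonic function that is $1$-periodic in $t$, but existence is all that is needed. A useful preliminary reduction is that it suffices to arrange \eqref{Diff} on one nonempty open set. Indeed, if $h$ is entire and harmonic, then $F(t,y):=h(t+1,y)-h(t,y)-g(t,y)$ is again harmonic on all of $\mathbb{R}^{d+1}$ (a translate of a harmonic function is harmonic, and $g$ is harmonic), hence real-analytic; since $\mathbb{R}^{d+1}$ is connected, $F\equiv 0$ as soon as $F$ vanishes on an open subset. Thus I only need a globally defined entire harmonic $h$ satisfying \eqref{Diff} on, say, the slab $S_{0,1}$.

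To find the form of $h$, I would first pass through formal operator calculus. Writing the shift as $T=e^{\partial_t}$, equation \eqref{Diff} reads $(e^{\partial_t}-I)h=g$, so formally $h=(e^{\partial_t}-I)^{-1}g$. Expanding with Bernoulli numbers, $\tfrac{1}{e^{x}-1}=\tfrac1x-\tfrac12+\sum_{k\ge 1}\tfrac{B_{2k}}{(2k)!}x^{2k-1}$, suggests $h=\partial_t^{-1}g-\tfrac12 g+\sum_{k\ge1}\tfrac{B_{2k}}{(2k)!}\partial_t^{2k-1}g$. Two points make this meaningful in the harmonic category. First, $\partial_t^{-1}g$ should be read as an \emph{entire harmonic primitive in $t$}: setting $H(t,y)=\int_0^t g(s,y)\,ds+c(y)$ one computes $\Delta H=\partial_t g(0,y)+\Delta_y c$, so it suffices to choose entire $c$ with $\Delta_y c=-\partial_t g(0,\cdot)$, which is possible because the Laplacian is surjective on the space of entire functions. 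Second, each $\partial_t^{2k-1}g$ is again entire and harmonic, so every term lies in the right space. The same computation, phrased through Cauchy data $a=h(0,\cdot)$, $b=\partial_t h(0,\cdot)$ and $A=\sqrt{-\Delta_y}$, leads to the closed forms $a=-\tfrac12 a_g-\tfrac{1}{2A}\cot(A/2)\,b_g$ and $b=\tfrac{A}{2}\cot(A/2)\,a_g-\tfrac12 b_g$, which exhibits the resonances at the frequencies $-\Delta_y=(2\pi k)^2$; these correspond exactly to the $1$-periodic homogeneous solutions and, since I only need existence, can be absorbed rather than inverted.

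The decisive difficulty is convergence and \emph{global} entireness: for $g$ of unrestricted growth the Bernoulli series diverges, and prescribing Cauchy data $(a,b)$ does not by itself yield a solution defined on all of $\mathbb{R}^{d+1}$, since the Cauchy problem for the Laplacian is ill posed. This is precisely where Theorem \ref{thm:ext1} is the right instrument. My plan is to realize the solution slab by slab: decompose $g$ according to its exponential type in the $t$-direction into pieces $g=\sum_m g_m$ for which the relevant series converges, solve each piece, and reassemble. Rather than reconstructing the global function from ill-posed Cauchy data, I would present each building block through its Dirichlet data on a unit slab and invoke Theorem \ref{thm:ext1} to guarantee that it extends to an entire harmonic function on $\mathbb{R}^{d+1}$, together with the growth control needed to sum the pieces locally uniformly. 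The hard part will therefore be the quantitative estimate: bounding the slab solutions produced by Theorem \ref{thm:ext1} sharply enough that $h=\sum_m h_m$ converges to an entire harmonic function. Once such an $h$ is in hand, one checks \eqref{Diff} on $S_{0,1}$ by construction and then propagates it to all of $\mathbb{R}^{d+1}$ by the real-analyticity reduction of the first paragraph.
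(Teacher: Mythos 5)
Your opening reduction (verify \eqref{Diff} on one open set and propagate by real-analyticity of harmonic functions) is correct, and you have rightly identified Theorem \ref{thm:ext1} as the essential tool. But what you have written is a plan, not a proof, and the gap sits exactly at its center. The construction rests on two steps that are neither carried out nor obviously carriable: (i) the decomposition $g=\sum_m g_m$ ``according to exponential type in the $t$-direction'' is never defined --- for a harmonic function of $d+1$ real variables there is no canonical such splitting, and whichever one you choose (say, truncations of an expansion into harmonic polynomials, each piece handled by Bernoulli polynomials) you must then prove that the individually constructed solutions $h_m$ can be renormalized so that $\sum_m h_m$ converges locally uniformly; that estimate is precisely what makes the classical Guichard--Hurwitz argument nontrivial already in one variable, and (ii) you invoke Theorem \ref{thm:ext1} ``together with the growth control needed to sum the pieces,'' but Theorem \ref{thm:ext1} is a pure existence statement and supplies no growth control whatsoever on the extension. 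Extracting quantitative bounds from its proof (the reflection formula $\widetilde h(t,y)=-h(-t,y)+2H(t,y)$, with $H$ produced by Cauchy--Kovalevskaya) would require controlling the Cauchy--Kovalevskaya solution in terms of the boundary data, a substantial separate piece of analysis. You flag this yourself as ``the hard part''; that is an accurate self-assessment, and it means the proposal is missing its proof precisely where the difficulty lies.

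The paper's argument avoids all of this with a structural trick that the operator-calculus viewpoint obscures. Split $g=g_e+g_0$ into even and odd parts at $t=0$. For even $g$, Theorem \ref{ThmDE1} shows that \emph{any} solution of the Dirichlet problem on the half-unit slab $\left[0,1/2\right]\times\mathbb{R}^{d}$ with data $h(0,y)=-\tfrac12 g(0,y)$ and $h(1/2,y)=0$ --- entire by Corollary \ref{CorSlab} --- already satisfies the difference equation, by two applications of the Schwarz reflection principle combined with the evenness of $g$: no series, no estimates, no decomposition into pieces. The odd part is then reduced to the even case by taking an even entire harmonic $t$-antiderivative $u$ of $g_0$ (Lemma \ref{LemmaDE2}), solving the difference equation for $u$, and differentiating in $t$. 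If you wish to salvage your route, the honest path is to carry out the Hurwitz-type estimates in the harmonic category; as it stands, the proposal does not constitute a proof.
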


Let us recall now some notations and definitions. A function $f:\Omega
\rightarrow \mathbb{C}$ defined on a domain $\Omega $ in the Euclidean space 
$\mathbb{R}^{d}$ is called \emph{harmonic }if $f$ is twice continuously
differentiable and $\Delta f\left( x\right) =0$ for all $x\in \Omega $ where 
\begin{equation*}
\Delta =\frac{\partial ^{2}}{\partial x_{1}^{2}}+...+\frac{\partial ^{2}}{%
\partial x_{d}^{2}}
\end{equation*}%
is the Laplace operator. We also write $\Delta _{x}$ instead of $\Delta $ to
indicate the variables for differentiation. We say that a function $g:%
\mathbb{R}\times \mathbb{R}^{d}\rightarrow \mathbb{C}$ is even (odd,
respectively) at $t_{0}$ if 
\begin{equation*}
g\left( t_{0}+t,y\right) =g\left( t_{0}-t,y\right),
\end{equation*}
and $g\left( t_{0}+t,y\right) =-g\left( t_{0}-t,y\right)$, respectively, for
all $t\in \mathbb{R}$ and $y\in \mathbb{R}^{d}$.

\smallskip

\noindent {\bf Acknowledgement.}
This work was initiated at the conference Dynamical Systems and Complex Analysis VII, May 2015, in Naharia, Israel.
The first two authors gratefully acknowledge NSF support for the conference (grant DMS -- 1464939).

\section{The Dirichlet problem on the slab with entire data}

Suppose  $h:\left[ a,b\right] \times \mathbb{R}^{d}\rightarrow \mathbb{C}$ is continuous and harmonic in the open slab 
$\left( a,b\right) \times \mathbb{R}^{d}$ such that $h\left( a,y\right)=h\left( b,y\right) =0$ for all $y\in \mathbb{R}^{d}.$ 
Then it is a well known consequence of the Schwarz reflection principle 
that $h$ extends to a harmonic function on $\mathbb{R}^{d+1}$ which is periodic in the variable $t$
with period $2\left( b-a\right) ,$ i.e. 
\begin{equation*}
h\left( t+2\left( b-a\right) ,y\right) =h\left( t,y\right) .
\end{equation*}

In order to obtain a similar result with arbitrary entire boundary data, 
we shall need the following extension of the Schwarz reflection principle:

\begin{theorem}
\label{ThmSchwarz}Suppose that $\Omega $ is a domain in $\mathbb{R}^{d+1}$
such that for each $x=\left( x_{1},...,x_{d+1}\right) \in \Omega $ the
vector $\widetilde{x}=\left( -x_{1},x_{2},...,x_{d+1}\right) \in \Omega ,$
and let $\Omega _{+},\Omega _{0},\Omega _{-}$ denote the sets of points $%
x\in \Omega $ for which $x_{1}$ is positive, zero, and negative (respectively). 
Suppose that $y\longmapsto F\left( y\right) $ for $y=\left( x_{2},...,x_{d+1}\right) \in \mathbb{R}^{d}$ 
is an entire function, and assume that $h$ is harmonic on $\Omega _{-}$ such that for all $y \in \Omega_0$
we have $h\left( x\right) \rightarrow F\left( y\right)$ as $x\rightarrow y\in \Omega _{0}.$ Then $h$ has a harmonic extension to $\Omega.$
\end{theorem}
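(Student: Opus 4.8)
The plan is to reduce the statement to the classical Schwarz reflection principle by first absorbing the (generally nonzero) boundary data $F$ into an auxiliary globally harmonic function. Concretely, I would first construct an entire harmonic function $H$ on $\mathbb{R}^{d+1}$, even in the variable $x_1$, with $H(0,y)=F(y)$ for all $y\in\mathbb{R}^d$. Once such an $H$ is available, set $u:=h-H$ on $\Omega_-$. Since $H$ is continuous with $H(0,y)=F(y)$ while $h(x)\to F(y)$ as $x\to y\in\Omega_0$, the difference $u$ is harmonic on $\Omega_-$ and tends to $0$ along $\Omega_0$. The classical Schwarz reflection principle then applies to $u$: the odd reflection $u(\widetilde{x})=-u(x)$ extends $u$ to a harmonic function on all of $\Omega$ (here the symmetry hypothesis $\widetilde{x}\in\Omega$ and the fact that $\Omega_0$ is relatively open in $\{x_1=0\}$ are exactly what is needed). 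Finally, $h=u+H$ exhibits the desired harmonic extension of $h$ to $\Omega$, since both summands are harmonic there.

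The heart of the argument, and the step I expect to be the main obstacle, is the construction of $H$. The natural candidate is the Cauchy--Kovalevskaya type series
\begin{equation*}
H(x_1,y)=\sum_{k=0}^{\infty}\frac{(-1)^k x_1^{2k}}{(2k)!}\,\Delta_y^k F(y),
\end{equation*}
which is formally even in $x_1$, satisfies $H(0,y)=F(y)$ and $\partial_{x_1}H(0,y)=0$, and is annihilated by $\Delta=\partial_{x_1}^2+\Delta_y$ term by term, since the recursion $\partial_{x_1}^2 H=-\Delta_y H$ forces the coefficients to be $(-1)^k\Delta_y^k F$. The real work is to show that this series converges locally uniformly on all of $\mathbb{R}^{d+1}$, so that $H$ is genuinely entire and harmonic rather than merely a local solution near $\Omega_0$.

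To establish global convergence I would invoke the entireness of $F$ through Cauchy estimates. Writing $\Delta_y^k=\sum_{|\alpha|=k}\binom{k}{\alpha}\partial^{2\alpha}$ and bounding each $|\partial^{2\alpha}F(y)|$ by $(2\alpha)!\,\rho^{-2k}$ times the supremum of $|F|$ over the polydisc of radius $\rho$ about $y$, one obtains after summing the multinomial expansion a bound of the form
\begin{equation*}
\frac{|x_1|^{2k}}{(2k)!}\,\bigl|\Delta_y^k F(y)\bigr|\;\le\;q(k)\Bigl(\frac{|x_1|}{\rho}\Bigr)^{2k}M_\rho(y),
\end{equation*}
where $M_\rho(y)$ is the supremum of $|F|$ over that polydisc (finite for every $\rho>0$ precisely because $F$ is entire) and $q(k)$ grows only polynomially in $k$, the factorial factors having combined via $(k!)^2 4^k/(2k)!=O(\sqrt{k})$. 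For a fixed point one is then free to choose $\rho$ larger than $|x_1|$ (say $\rho=2|x_1|$), which makes the series converge geometrically; a uniform choice on compact sets yields local uniform convergence, so $H$ is entire and harmonic.

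It is exactly in this last step that the hypothesis ``$F$ entire'' is indispensable: a merely real-analytic $F$ would only guarantee convergence of the series in a neighborhood of $\Omega_0$, producing a local reflection across the hyperplane but not necessarily an extension to all of $\Omega$. Assuming the construction of $H$ succeeds globally, the remainder of the proof is the routine reflection bookkeeping sketched in the first paragraph, and I would present it briefly, taking care only to verify that $u$ attains continuous boundary value $0$ on the relatively open set $\Omega_0$ so that the classical principle genuinely applies.
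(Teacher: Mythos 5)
Your argument is correct and follows essentially the same route as the paper: subtract a global, $x_1$-even harmonic extension $H$ of the Cauchy data $H(0,y)=F(y)$, $\partial_{x_1}H(0,y)=0$, and then apply the classical odd reflection to $h-H$. The only difference is that the paper obtains $H$ by citing the Cauchy--Kovalevskaya theorem for entire data (\cite[p.~80, Example 11.2]{Khav96}), whereas you prove that ingredient directly via the series $\sum_k (-1)^k x_1^{2k}\Delta_y^k F(y)/(2k)!$ and Cauchy estimates --- a correct, self-contained substitute for the citation.
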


\begin{proof}
By the Cauchy-Kovalevskaya Theorem applied to the Laplace operator (see \cite%
[p. 80, Example 11.2]{Khav96}), there is a unique entire function $H$ such
that $H\left( 0,y\right) =F\left( y\right) $ and $\frac{\partial }{\partial x}H(0,y)=0$ for all $y\in \mathbb{R}^{d}$.
Moreover, from the uniqueness part of the Cauchy-Kovalevskaya Theorem it follows that $H$ is even at $t_{0}=0$, 
since $H(-t,y)$ solves the same Cauchy problem as $H(t,y)$. 
Consider the function%
\begin{equation*}
f\left( t,y\right) :=h\left( t,y\right) -H\left( t,y\right)
\end{equation*}
for $\left( t,y\right) \in \Omega _{-}.$ 
Then for each $y\in \R^d$ we have $f\left( t,y\right)\rightarrow 0$ as $t\rightarrow 0,$ and by the Schwarz reflection
principle (see \cite[p. 8]{ArGa01}) $f$ extends to a harmonic function $%
\widetilde{f}$ on $\Omega $ by the formula 
\begin{equation*}
\widetilde{f}\left( t,y\right) =-f\left( -t,y\right)
\end{equation*}%
for all $\left( t,y\right) \in \Omega _{+}.$ Then 
\begin{equation*}
\widetilde{h}\left( t,y\right) :=\widetilde{f}\left( t,y\right) +H\left(
t,y\right)
\end{equation*}
is a harmonic extension of $h$ from $\Omega _{-}$ to $\Omega$, and for $t>0$ we have 
\begin{equation}\label{eq:last}
\widetilde{h}\left( t,y\right) =\widetilde{f}\left( t,y\right) +H\left(t,y\right) =-f\left( -t,y\right) +H\left( t,y\right) =-h\left(-t,y\right) +2H\left( t,y\right) .
\end{equation}
\end{proof}

Now we obtain our first main result:

\begin{theorem}
\label{thm:functeq} Assume that $h\in C\left( \left[ a,b\right] \times 
\mathbb{R}^{d}\right) $ is harmonic in the slab $\left( a,b\right) \times 
\mathbb{R}^{d}$ such that $y\longmapsto h\left( a,y\right) $ and $%
y\longmapsto $ $h\left( b,y\right) $ are entire. Then there exists a
harmonic extension $\widetilde{h}:\mathbb{R}^{d+1}\rightarrow \mathbb{C}.$
\end{theorem}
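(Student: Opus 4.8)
The plan is to extend $h$ by iterating the reflection principle of Theorem \ref{ThmSchwarz} across the two faces $\{t=a\}$ and $\{t=b\}$ of the slab, reflecting alternately to the left and to the right until the resulting slabs exhaust $\mathbb{R}^{d+1}$.

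First I would reflect across $\{t=a\}$. After an affine change of the first coordinate carrying $\{t=a\}$ to $\{x_1=0\}$ and the open slab into $\{x_1<0\}$, the hypotheses of Theorem \ref{ThmSchwarz} hold on the symmetric domain $(2a-b,b)\times\mathbb{R}^d$: the function $h$ is harmonic on the half $(a,b)\times\mathbb{R}^d$, and since $h\in C([a,b]\times\mathbb{R}^d)$ with $y\mapsto h(a,y)$ entire, we have $h(t,y)\to h(a,y)$ with entire data. The theorem then yields a harmonic extension to $(2a-b,b)\times\mathbb{R}^d$, described for $t<a$ by the analogue of \eqref{eq:last}, namely $\widetilde h(t,y)=-h(2a-t,y)+2H_a(t,y)$, where $H_a$ is the even Cauchy--Kovalevskaya extension of $h(a,\cdot)$.

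The key point, which I expect to be the crux of the proof, is that reflection preserves entirety of the boundary data, so the procedure can be repeated indefinitely. Indeed, the new face $\{t=2a-b\}$ carries the data $\widetilde h(2a-b,y)=-h(b,y)+2H_a(2a-b,y)$, which is entire in $y$ because $h(b,\cdot)$ is entire by hypothesis and $H_a$ is entire by the Cauchy--Kovalevskaya theorem; the extended function is also continuous up to this face. Thus the extension is again a continuous harmonic function on a slab bearing entire data on both faces, and Theorem \ref{ThmSchwarz} applies once more.

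I would then iterate, reflecting alternately across the current left and right faces. Reflecting a slab of width $w$ across one of its faces produces a slab of width $2w$, so after $n$ steps the width is $2^n(b-a)$; alternating the two sides sends the left endpoint to $-\infty$ and the right endpoint to $+\infty$. Each step merely extends the function built at the previous step, so the successive extensions agree on overlaps and fit together into a single harmonic function $\widetilde h$ on all of $\mathbb{R}^{d+1}$. The only delicate point is the bookkeeping verifying that entirety of the data is inherited at every stage; once that invariant is in place, the exhaustion of $\mathbb{R}^{d+1}$ and the conclusion are immediate.
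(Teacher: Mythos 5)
Your proposal is correct and follows essentially the same route as the paper: both iterate Theorem \ref{ThmSchwarz} and use the explicit reflection formula \eqref{eq:last} to verify that the newly created face again carries entire data, so the reflection can be repeated to exhaust $\mathbb{R}^{d+1}$. The only differences are cosmetic --- you alternate sides and double the width at each step, whereas the paper extends by one slab-width at a time in each direction separately.
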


\begin{proof}
The following provides an inductive step:

\noindent \textbf{Claim.} There is an extension $\widetilde{h}\in C\left( %
\left[ a,2b-a\right] \times \mathbb{R}^{d}\right) $ of $h$ which is harmonic
in $\left( a,2b-a\right) \times \mathbb{R}^{d}$ such that $y\longmapsto $ $%
\widetilde{h}\left( 2b-a,y\right) $ is entire.

Using the Claim and induction, one obtains a harmonic extension on $\left(
a,b+n\left( b-a\right) \right) \times \mathbb{R}^{d}$ for each natural
number $n$ such that $y\longmapsto $ $\widetilde{h} \left( a+n\left(
b-a\right) ,y\right) $ is entire. Similarly, there is a harmonic extension
on $\left( a-n\left( b-a\right) ,b\right) \times \mathbb{R}^{d}$ for each
natural number $n,$ and the proof is complete.

In order to establish the Claim, we may assume that $a<b=0.$ 
Theorem \ref{ThmSchwarz} provides an extension 
$\widetilde{h}\left( t,y\right)$ of $h(t,y)$ to $\left[a,-a\right] \times \mathbb{R}^{d}$.
Moreover, from the proof of Theorem \ref{ThmSchwarz}, $\widetilde{h}\left( t,y\right)$ is given by Equation (\ref{eq:last})
$$\widetilde{h}\left( t,y\right) = -h\left(-t,y\right) +2H\left( t,y\right),$$
where $H$ is an entire harmonic function.
This implies that the restriction $\widetilde{h}\left(-a,y\right)$ is entire since $h\left(a,y\right)$ is assumed entire,
and the result then follows.
\end{proof}

\begin{corollary}
\label{CorSlab}Let $f_{0},f_{1}:\mathbb{R}^{d}\rightarrow \mathbb{C}$ be
entire functions. Then any solution $h$ for the Dirichlet problem for the
slab $\left( a,b\right) \times \mathbb{R}^{d}$ with $h\left( a,y\right)
=f_{0}\left( y\right) $ and $h\left( b,y\right) =f_{1}\left( y\right) $
extends to a harmonic function $h:\mathbb{R}^{d+1}\rightarrow \mathbb{C}$.
\end{corollary}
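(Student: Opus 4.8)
The plan is to derive the Corollary directly from Theorem~\ref{thm:functeq}, treating it as essentially a restatement with matching boundary conventions. The Corollary asserts that a solution $h$ of the Dirichlet problem with \emph{prescribed} entire boundary values $h(a,y)=f_0(y)$ and $h(b,y)=f_1(y)$ extends harmonically to all of $\mathbb{R}^{d+1}$, whereas Theorem~\ref{thm:functeq} assumes $h\in C([a,b]\times\mathbb{R}^d)$ is harmonic in the open slab with the boundary \emph{traces} $y\mapsto h(a,y)$ and $y\mapsto h(b,y)$ entire. First I would observe that these two hypotheses coincide: a solution of the Dirichlet problem is by definition continuous up to the boundary and harmonic inside, and the prescribed data $f_0,f_1$ being entire is exactly the statement that the boundary traces $h(a,\cdot)$ and $h(b,\cdot)$ are entire functions on $\mathbb{R}^d$.

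Given this identification, the proof is a one-line invocation. I would state: since $f_0$ and $f_1$ are entire, the maps $y\mapsto h(a,y)=f_0(y)$ and $y\mapsto h(b,y)=f_1(y)$ are entire, so the hypotheses of Theorem~\ref{thm:functeq} are met. That theorem then furnishes a harmonic extension $\widetilde{h}:\mathbb{R}^{d+1}\rightarrow\mathbb{C}$ of $h$, which is precisely the asserted conclusion. No further estimates, limiting arguments, or reflection constructions are needed at this stage, because all the analytic content has already been packaged into Theorem~\ref{thm:functeq} and, beneath it, the generalized reflection principle of Theorem~\ref{ThmSchwarz}.

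The only point that requires a moment's care is purely one of definitions: I would make sure the phrase ``solution $h$ for the Dirichlet problem'' is understood to mean a function in $C([a,b]\times\mathbb{R}^d)$ that is harmonic in the open slab and attains the boundary values $f_0,f_1$ continuously, since this is what lets the boundary values be read off as the continuous traces $h(a,y)$ and $h(b,y)$. I do not expect any genuine obstacle here; the substantive work lies in Theorem~\ref{thm:functeq} and its reflection-based inductive construction of the extension slab-by-slab, and the Corollary merely records the conclusion in the language of prescribed Dirichlet data. Accordingly the proof will read essentially as: ``This is immediate from Theorem~\ref{thm:functeq}, since the boundary data $f_0,f_1$ being entire means exactly that the traces $y\mapsto h(a,y)$ and $y\mapsto h(b,y)$ are entire.''
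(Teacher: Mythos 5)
Your proposal is correct and matches the paper's own proof, which likewise just invokes Theorem~\ref{thm:functeq} after noting that a solution of the Dirichlet problem with entire data $f_0,f_1$ satisfies that theorem's hypotheses (the paper additionally cites \cite{Gard93} for the existence of such a solution, but that is not needed for the ``any solution extends'' claim). No changes needed.
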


\begin{proof}
It is known that the Dirichlet problem for the slab with continuous data has
a solution $h$, see e.g. \cite{Gard93}. By Theorem \ref{thm:functeq} the
function $h$ has an entire extension.
\end{proof}

\section{The difference equation for harmonic functions}

We recall from complex analysis \cite[p. 407]{BeGa95} that the inhomogeneous difference equation 
\begin{equation}
f\left( z+1\right) -f\left( z\right) = G \left( z\right)  \label{eqcomplex}
\end{equation}%
for a given entire function $G\left( z\right) $ has an entire solution $f\left( z\right)$. 
This is a classical fact, and the solution given in \cite[p. 407]{BeGa95} uses Bernoulli polynomials, 
an idea which goes back to the work of Guichard, Appel, and Hurwitz more than a century ago (see \cite{Ap, Hu}).

Taking the real part of both
sides and recalling that any harmonic function $g(t,y)$ in the plane is the
real part $\Re e \, \{ G(t + i y) \}$ of some entire function $G(z)$, it
follows that the difference equation 
\begin{equation}  \label{eqharmonic}
h\left( t+1,y\right) -h\left( t,y\right) = g\left( t,y\right)
\end{equation}
for a given harmonic function $g$ on $\mathbb{R}^{2}$ has a harmonic
solution $h$ defined on $\mathbb{R}^{2}.$

In this section, we shall generalize this result to all dimensions of the variable $y\in \mathbb{R}^{d}$.
Our approach is based on solving the Dirichlet problem for the slab $\left[0,1/2\right] \times \mathbb{R}^{d}$,
and thus we do not need special functions as in the above-mentioned classical studies.

It is a remarkable fact that equation (\ref{eqcomplex}) can be solved for meromorphic functions as well. 
It would be interesting to extend our
results to include the difference equation (\ref{eqharmonic}) for $g$ with
singularities (say of a controlled type).

As an intermediate step toward solving the difference equation (\ref%
{eqharmonic}), we provide a solution in the case when $g\left( t,y\right)$
is even:

\begin{theorem}
\label{ThmDE1}Let $g:\mathbb{R}\times \mathbb{R}^{d}\rightarrow \mathbb{C}$
be harmonic and even. Then any solution $h\left( t,y\right) $ of the
Dirichlet problem for the slab $\left[ 0,1/2\right] \times \mathbb{R}^{d}$
with data
\begin{equation*}
h\left( 0,y\right) =-\frac{1}{2}g\left( 0,y\right) \text{ and }h\left( \frac{%
1}{2},y\right) =0
\end{equation*}%
for all $y\in \mathbb{R}^{d}$ induces an entire harmonic solution of the
difference equation 
\begin{equation*}
h\left( t+1,y\right) -h\left( t,y\right) =g\left( t,y\right) .
\end{equation*}
\end{theorem}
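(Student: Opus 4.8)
The plan is to exploit two reflection symmetries of the entire extension of $h$ and then combine them by a short algebraic manipulation. First I would observe that the boundary data $-\frac{1}{2}g(0,\cdot)$ and $0$ are entire on $\mathbb{R}^{d}$, since $g$ is harmonic and hence entire in $y$; thus by Corollary \ref{CorSlab} the solution $h$ extends to an entire harmonic function on all of $\mathbb{R}^{d+1}$, which I continue to denote by $h$. Any identity I derive below on an open subset will then propagate to all of $\mathbb{R}^{d+1}$ by the identity theorem for real-analytic functions, so it suffices to establish the relevant relations locally near the two faces of the slab.

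The first symmetry comes from the face $t=\tfrac{1}{2}$, where $h$ vanishes. By the classical Schwarz reflection principle $h$ is odd about $t=\tfrac12$, which I record as
\begin{equation}\label{eq:oddhalf}
h\left(1-t,y\right)=-h\left(t,y\right)
\end{equation}
for all $\left(t,y\right)$. The second symmetry comes from the face $t=0$ via Theorem \ref{ThmSchwarz}. Here the data is $F(y)=-\tfrac{1}{2}g(0,y)$, and the point I would stress is that the even entire function $H$ produced by the Cauchy--Kovalevskaya step is exactly $H=-\tfrac{1}{2}g$. Indeed $-\tfrac{1}{2}g$ is entire harmonic, satisfies $H(0,y)=-\tfrac{1}{2}g(0,y)=F(y)$, and, because $g$ is even at $t_{0}=0$, its $t$-derivative vanishes there, so $\partial_{t}H(0,y)=0$; by the uniqueness clause of the Cauchy--Kovalevskaya theorem this identifies $H$ with $-\tfrac12 g$. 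Substituting into the reflection formula \eqref{eq:last} (with the two half-spaces interchanged, so that $h$ is reflected from $t>0$ to $t<0$) and using the evenness of $g$ once more then yields
\begin{equation}\label{eq:oddzero}
h\left(-t,y\right)=-h\left(t,y\right)-g\left(t,y\right).
\end{equation}

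With \eqref{eq:oddhalf} and \eqref{eq:oddzero} in hand the difference equation follows immediately. Replacing $t$ by $-t$ in \eqref{eq:oddhalf} gives $h(1+t,y)=-h(-t,y)$, and inserting \eqref{eq:oddzero} produces
\begin{equation*}
h\left(t+1,y\right)=-h\left(-t,y\right)=h\left(t,y\right)+g\left(t,y\right),
\end{equation*}
which is precisely \eqref{eqharmonic}.

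I expect the main obstacle to be the careful justification of the second reflection, namely the identification of the even entire function supplied by Theorem \ref{ThmSchwarz} with $-\tfrac{1}{2}g$. This rests squarely on the uniqueness part of the Cauchy--Kovalevskaya theorem together with the hypothesis that $g$ is even, which is exactly what forces the normal derivative $\partial_{t}g(0,y)$ to vanish. Everything else is bookkeeping: verifying that the two reflection identities, initially valid only in a neighborhood of the respective faces, hold throughout $\mathbb{R}^{d+1}$ by real-analytic continuation, and then assembling them.
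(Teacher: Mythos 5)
Your proof is correct and follows essentially the same route as the paper: extend $h$ to an entire harmonic function via Corollary \ref{CorSlab}, reflect oddly about $t=\tfrac{1}{2}$, and perform a second generalized reflection in which the Cauchy--Kovalevskaya function is $\pm\tfrac{1}{2}g$ (suitably translated), with the evenness of $g$ killing the normal derivative and supplying the identity $\tfrac{1}{2}g(-t,y)+\tfrac{1}{2}g(t,y)=g(t,y)$. The only cosmetic difference is that you carry out the second reflection about $t=0$ by invoking Theorem \ref{ThmSchwarz} and formula (\ref{eq:last}) directly, whereas the paper reflects about $t=1$ using the auxiliary function $F(t,y)=h(t,y)-\tfrac{1}{2}g(t-1,y)$; these are mirror images of the same argument.
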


\begin{proof}
By Corollary \ref{CorSlab} there exists an entire harmonic function $h\left(
t,y\right) $ such that $h\left( 0,y\right) =-\frac{1}{2}g\left( 0,y\right) $
and $h\left( \frac{1}{2},y\right) =0.$ The last equation and the Schwarz
reflection principle shows that 
\begin{equation}
h\left( \frac{1}{2}+t,y\right) =-h\left( \frac{1}{2}-t,y\right) .
\label{eqh1}
\end{equation}%
Inserting $t=\frac{1}{2}$ in equation (\ref{eqh1}) gives $h\left( 1,y\right)
=-h\left( 0,y\right) =\frac{1}{2}g\left( 0,y\right) .$ Now we consider the
harmonic function 
\begin{equation*}
F\left( t,y\right) =h\left( t,y\right) -\frac{1}{2}g\left( t-1,y\right) .
\end{equation*}%
Then $F\left( 1,y\right) =0,$ and by Schwarz's reflection principle, $%
F\left( 1+t,y\right) =-F\left( 1-t,y\right) $ for $y\in $ $\mathbb{R}^{d}.$
Then 
\begin{eqnarray*}
h\left( 1+t,y\right) &=&F\left( 1+t,y\right) +\frac{1}{2}g\left( t,y\right)
=-F\left( 1-t,y\right) +\frac{1}{2}g\left( t,y\right) \\
&=&-h\left( 1-t,y\right) +\frac{1}{2}g\left( -t,y\right) +\frac{1}{2}g\left(
t,y\right)
\end{eqnarray*}

Since $g$ is even we have $\frac{1}{2}g\left( -t,y\right) +\frac{1}{2}%
g\left( t,y\right) =g\left( t,y\right) $ and 
\begin{equation*}
h\left( 1-t,y\right) =h\left( \frac{1}{2}+\frac{1}{2}-t,y\right) =-h\left( 
\frac{1}{2}-\left( \frac{1}{2}-t\right) ,y\right) =-h\left( t,y\right) .
\end{equation*}%
It follows that $h\left( 1+t,y\right) =h\left( t,y\right) +g\left(
t,y\right) .$
\end{proof}

The next result is surely a part of mathematical folklore; we include an
elementary proof in order to keep the paper self-contained.

\begin{lemma}
\label{LemmaDE2} Let $g\left( t,y\right) $ be an entire harmonic function.
Then there exists an entire harmonic function $u\left( t,y\right) $ such
that 
\begin{equation*}
\frac{\partial }{\partial t}u\left( t,y\right) =g\left( t,y\right) .
\end{equation*}%
If $g\left( t,y\right) $ is odd then $u\left( t,y\right) $ can be chosen to
be even.
\end{lemma}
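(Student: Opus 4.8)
The plan is to write down the obvious candidate for $u$, namely the partial antiderivative in $t$,
\[
u(t,y) = \int_0^t g(s,y)\,ds,
\]
measure how far it is from being harmonic, and then repair it by a term that does not affect the $t$-derivative. Since $g$ is entire, $u$ is entire and $\partial_t u = g$ by construction. To compute $\Delta u$ I would differentiate under the integral sign and use that $g$ is harmonic, i.e. $\Delta_y g=-\partial_t^2 g$: this gives $\partial_t^2 u=\partial_t g(t,y)$ and $\Delta_y u=\int_0^t\Delta_y g(s,y)\,ds=-\int_0^t\partial_s^2 g(s,y)\,ds=-\partial_t g(t,y)+\partial_t g(0,y)$, so that $\Delta u(t,y)=\partial_t g(0,y)$. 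Thus $u$ fails to be harmonic only by the function $\phi(y):=\partial_t g(0,y)$, which depends on $y$ alone and is entire.

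The correction is now forced: I would look for an entire function $v=v(y)$, independent of $t$, with $\Delta_y v=\phi$, and set $\widetilde u(t,y)=u(t,y)-v(y)$. Then $\partial_t\widetilde u=g$ is unchanged while $\Delta\widetilde u=\phi-\Delta_y v=0$. (The same bookkeeping appears if one instead matches power series in $t$: writing $u=\sum_k b_k(y)t^k$, the relation $\partial_t u=g$ forces $b_k=a_{k-1}/k$ for $k\ge 1$, harmonicity of $g$ makes the harmonicity recursion for $u$ hold automatically for $k\ge 1$, and the single remaining condition is exactly $\Delta_y b_0=-\phi$.) Hence everything reduces to solving the Poisson equation $\Delta_y v=\phi$ for an arbitrary entire right-hand side, and this is the step I expect to be the main obstacle, since $\phi$ can be any entire function.

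To solve $\Delta_y v=\phi$ in the entire category I would expand $\phi=\sum_{m\ge 0}P_m$ into its homogeneous parts $P_m$ (degree $m$) and invert $\Delta_y$ degree by degree. For a homogeneous polynomial $P$ of degree $m$ the ansatz $Q=\sum_{j\ge 0}c_j\,|y|^{2j+2}\Delta_y^{\,j}P$ is a finite sum (as $\Delta_y^{\,j}P=0$ once $2j>m$), and using $\Delta_y(|y|^{2s}R)=2s(2s+d-2+2\deg R)|y|^{2s-2}R+|y|^{2s}\Delta_y R$ one determines the $c_j$ recursively so that $\Delta_y Q=P$; explicitly $c_j=(-1)^j/\prod_{i=0}^{j}\lambda_i$ with $\lambda_i=2(i+1)(2m-2i+d)$, all strictly positive. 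Applying this to each $P_m$ produces $Q_m$ with $\Delta_y Q_m=P_m$, and the remaining work is the estimate showing that $v:=\sum_m Q_m$ converges to an entire function. Here the rapid decay $|c_j|\le\big((2m)^{j+1}(j+1)!\big)^{-1}$ must be played off against the growth of $\Delta_y^{\,j}P_m$; I would control this with a standard homogeneous-polynomial norm (for instance the apolar/Fischer inner product, for which multiplication by $|y|^2$ and $\Delta_y$ are adjoint), concluding that $\sum_m Q_m$ has the same (infinite) radius of convergence as $\phi$ and may be differentiated termwise, so that $\Delta_y v=\phi$.

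Finally, for the refinement when $g$ is odd at $t_0=0$: then $g(0,y)=0$ and the antiderivative $u(t,y)=\int_0^t g(s,y)\,ds$ is even in $t$, while the correction $v=v(y)$ is independent of $t$ and hence trivially even in $t$. Therefore $\widetilde u=u-v$ is even, as required.
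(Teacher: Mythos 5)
Your proposal follows essentially the same route as the paper's proof: take the $t$-antiderivative of $g$, observe that its Laplacian reduces to an entire function $\phi(y)$ of $y$ alone, correct by an entire solution of $\Delta_y v=\phi$, and note that oddness of $g$ makes the antiderivative (and hence the corrected function) even. The only difference is that you sketch a proof of the solvability of $\Delta_y v=\phi$ in the entire category --- a step the paper simply declares obvious --- and your recursion for the $c_j$ and the bound $|c_j|\le\bigl((2m)^{j+1}(j+1)!\bigr)^{-1}$ are correct, though the final convergence estimate is only outlined.
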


\begin{proof}
Define $h\left( t,y\right) :=\int_{0}^{t}g\left( \tau ,y\right) d\tau .$
Then $\frac{\partial }{\partial t}h\left( t,y\right) =g\left( t,y\right) $
and 
\begin{equation*}
\Delta _{y}\frac{\partial }{\partial t}h\left( t,y\right) =\Delta
_{y}g\left( t,y\right) =-\frac{\partial ^{2}}{\partial t^{2}}g\left(
t,y\right).
\end{equation*}
We conclude that $\frac{\partial }{\partial t}\left( \Delta
_{y}h\left(t,y\right) +\frac{\partial }{\partial t}g\left( t,y\right)
\right) =0$, and it follows that 
\begin{equation*}
f\left( y\right) :=\Delta _{y}h\left( t,y\right) +\frac{\partial }{\partial t%
}g\left( t,y\right)
\end{equation*}
only depends on $y$ and not on $t.$ Obviously for any entire function $%
f\left( y\right)$ there exists an entire function $G\left(y\right)$ such
that $\Delta _{y} G\left( y\right) =f\left( y\right)$. Then 
\begin{equation*}
u\left( t,y\right) :=h\left( t,y\right) -G\left( y\right)
\end{equation*}
is a solution of the equation $\frac{\partial }{\partial t}u\left(t,y\right)
=g\left( t,y\right)$, and $u\left( t,y\right) $ is harmonic, since we have 
\begin{eqnarray*}
\Delta _{t,y}u\left( t,y\right) &=&\frac{\partial ^{2}}{\partial t^{2}}
h\left( t,y\right) +\Delta _{y}h\left( t,y\right) -\Delta _{y}
G\left(y\right) \\
&=&\frac{\partial ^{2}}{\partial t^{2}}h\left( t,y\right) +\Delta
_{y}h\left( t,y\right) -\Delta _{y}h\left( t,y\right) -\frac{\partial }{%
\partial t}g\left( t,y\right) =0.
\end{eqnarray*}
If $g(t,y)$ is odd, then $h(t,y)$ and hence $u(t,y)$ are both even.
\end{proof}

Now we are able to prove our second main result:

\begin{theorem}
\label{thm:harmdiff} If $g:\mathbb{R}\times \mathbb{R}^{d}\rightarrow 
\mathbb{C}$ is harmonic then the difference equation $h\left( t+1,y\right)
-h\left( t,y\right) =g\left( t,y\right) $ has a harmonic solution $h:\mathbb{%
R}\times \mathbb{R}^{d}\rightarrow \mathbb{C}$.
\end{theorem}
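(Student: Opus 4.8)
The plan is to reduce the general case to the two tools already established: Theorem~\ref{ThmDE1}, which solves the difference equation when the datum is even, and Lemma~\ref{LemmaDE2}, which produces $t$-antiderivatives and in particular turns an odd integrand into an \emph{even} antiderivative. First I would split $g$ at $t_{0}=0$ into its even and odd parts,
$$g_{e}\left( t,y\right) =\tfrac{1}{2}\left( g\left( t,y\right) +g\left( -t,y\right) \right), \qquad g_{o}\left( t,y\right) =\tfrac{1}{2}\left( g\left( t,y\right) -g\left( -t,y\right) \right),$$
so that $g=g_{e}+g_{o}$. Because the reflection $t\mapsto -t$ preserves harmonicity and the harmonic functions form a vector space, both $g_{e}$ and $g_{o}$ are harmonic on $\mathbb{R}\times \mathbb{R}^{d}$. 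By linearity of the difference equation it then suffices to solve it separately for $g_{e}$ and $g_{o}$ and add the solutions.

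For the even part the result is immediate: Theorem~\ref{ThmDE1} applies verbatim and yields an entire harmonic $h_{e}$ with $h_{e}\left( t+1,y\right) -h_{e}\left( t,y\right) =g_{e}\left( t,y\right)$.

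The genuinely new ingredient is the odd part, and this is where I expect the only real obstacle to lie, since Theorem~\ref{ThmDE1} cannot be applied to an odd datum directly. Here I would invoke Lemma~\ref{LemmaDE2}: since $g_{o}$ is odd, there is an \emph{even} entire harmonic function $u$ with $\frac{\partial}{\partial t}u=g_{o}$. Now $u$ is even, so Theorem~\ref{ThmDE1} supplies an entire harmonic $v$ solving $v\left( t+1,y\right) -v\left( t,y\right) =u\left( t,y\right)$. The key observation is that $\frac{\partial}{\partial t}$ commutes with the difference operator: setting $h_{o}:=\frac{\partial}{\partial t}v$, which is again harmonic because $v$ is, we obtain
$$h_{o}\left( t+1,y\right) -h_{o}\left( t,y\right) =\frac{\partial}{\partial t}\left( v\left( t+1,y\right) -v\left( t,y\right) \right) =\frac{\partial}{\partial t}u\left( t,y\right) =g_{o}\left( t,y\right).$$
Thus $h_{o}$ solves the difference equation for the odd datum.

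Finally, $h:=h_{e}+h_{o}$ is harmonic on $\mathbb{R}\times \mathbb{R}^{d}$ and satisfies $h\left( t+1,y\right) -h\left( t,y\right) =g_{e}\left( t,y\right) +g_{o}\left( t,y\right) =g\left( t,y\right)$, which completes the proof. The only points needing care are the harmonicity of $g_{e}$ and $g_{o}$, which is immediate from linearity and the reflection-invariance of $\Delta$, and the fact that differentiating an entire harmonic function in $t$ again yields an entire harmonic function, so that $h_{o}=\frac{\partial}{\partial t}v$ is a legitimate harmonic solution.
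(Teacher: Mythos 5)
Your proposal is correct and follows essentially the same route as the paper: split $g$ into even and odd parts, handle the even part directly via Theorem~\ref{ThmDE1}, and for the odd part integrate in $t$ using Lemma~\ref{LemmaDE2} to get an even antiderivative, apply Theorem~\ref{ThmDE1} to that, and differentiate the result. The only (cosmetic) difference is that you spell out the commutation of $\partial/\partial t$ with the difference operator and the harmonicity of the even/odd parts, which the paper leaves implicit.
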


\begin{proof}
Write the harmonic function $g$ as a sum $g_{0}+g_{e}$, where $g_{0}$ is odd
and $g_{e}$ is even. For right hand side $g_{e}$, there exists a solution $%
h_{e}\left(t,y\right) $ by Theorem \ref{ThmDE1}, and so it suffices to solve
the difference equation with right hand side $g_0$. By Lemma \ref{LemmaDE2}
there exists an even harmonic function $u\left( t,y\right)$ such that 
\begin{equation*}
\frac{\partial }{\partial t}u\left( t,y\right) =g_{0}\left( t,y\right).
\end{equation*}
By Theorem \ref{ThmDE1}, there exists a harmonic entire function $%
H\left(t,y\right) $ such that 
\begin{equation*}
H\left( t+1,y\right) -H\left( t,y\right) =u\left( t,y\right).
\end{equation*}
Differentiating $H(t,y)$ with respect to $t$, we thus find the solution of
the difference equation with right hand side $g_{0}\left(t,y\right)$.
\end{proof}

Finally, although the solution to the difference equation is far from unique,
we are able to prove the following result.

\begin{theorem}
Let $g:\mathbb{R}\times \mathbb{R}^{d}\rightarrow \mathbb{C}$ be a harmonic
function and let $h_{j}:\mathbb{R}\times \mathbb{R}^{d}\rightarrow \mathbb{C}
$ for $j=1,2$ be harmonic solutions of the difference equation 
\begin{equation}\label{eq:diff}
h_{j}\left( t+1,y\right) -h_{j}\left( t,y\right) =g\left( t,y\right) .
\end{equation}
Assume that we have the estimate 
\begin{equation}
\left\vert h_{1}(t,y)-h_{2}(t,y)\right\vert =o\left( \left\vert y\right\vert
^{\left( 1-d\right) /2}e^{2\pi \left\vert y\right\vert }\right) ,
\label{eqEst1}
\end{equation}%
as $\left\vert y\right\vert \rightarrow \infty $ (uniformly in $t$). Then
there exists a harmonic function $r:\mathbb{R}^{d}\rightarrow \mathbb{C}$
such that 
\begin{equation*}
h_{1}\left( t,y\right) =h_{2}\left( t,y\right) +r\left( y\right)
\end{equation*}%
for all $y\in \mathbb{R}^{d}$ and $t\in \mathbb{R}$.
\end{theorem}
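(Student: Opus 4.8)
The plan is to study the difference $w:=h_{1}-h_{2}$, which by (\ref{eq:diff}) is an entire harmonic function on $\mathbb{R}^{d+1}$ satisfying $w(t+1,y)-w(t,y)=0$; that is, $w$ is $1$-periodic in the variable $t$. By hypothesis (\ref{eqEst1}) we have $\sup_{t}|w(t,y)|=o(|y|^{(1-d)/2}e^{2\pi|y|})$ as $|y|\to\infty$. The goal is to show that $w$ does not depend on $t$, for then $r(y):=w(0,y)$ is the desired harmonic function. First I would expand $w$ in a Fourier series in the periodic variable, setting
\begin{equation*}
c_{n}(y):=\int_{0}^{1} w(t,y)\,e^{-2\pi i n t}\,dt,\qquad n\in\mathbb{Z}.
\end{equation*}
Since $w(\cdot,y)$ is continuous and $1$-periodic, it suffices to prove that $c_{n}\equiv 0$ for every $n\neq 0$; completeness of the Fourier system then forces $w(t,y)=c_{0}(y)$ for all $t$.

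The second step is to identify the equation satisfied by each coefficient. Writing $\Delta_{t,y}w=\partial_{t}^{2}w+\Delta_{y}w=0$ and differentiating under the integral sign gives $\Delta_{y}c_{n}(y)=-\int_{0}^{1}\partial_{t}^{2}w\,e^{-2\pi i n t}\,dt$. Integrating by parts twice and using that $w$ (and hence $\partial_{t}w$) is $1$-periodic in $t$, the boundary terms cancel and the remaining integral equals $-(2\pi n)^{2}c_{n}(y)$. Thus each coefficient solves the modified Helmholtz (Yukawa) equation
\begin{equation*}
\Delta_{y}c_{n}(y)=(2\pi n)^{2}\,c_{n}(y),\qquad y\in\mathbb{R}^{d}.
\end{equation*}
For $n=0$ this says $c_{0}$ is harmonic on $\mathbb{R}^{d}$, and $c_{0}$ will be the function $r$. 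For $n\neq 0$ it is an entire solution of $\Delta_{y}c_{n}=\lambda^{2}c_{n}$ with $\lambda=2\pi|n|\ge 2\pi$, and from the definition of $c_{n}$ together with the uniform estimate we inherit $|c_{n}(y)|\le\sup_{t}|w(t,y)|=o(|y|^{(1-d)/2}e^{2\pi|y|})$.

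The heart of the matter is a Liouville-type theorem: an entire solution $u$ of $\Delta u=\lambda^{2}u$ on $\mathbb{R}^{d}$ with $\lambda>0$ and $|u(y)|=o(|y|^{(1-d)/2}e^{\lambda|y|})$ must vanish identically. I would prove this through the spherical mean value property for the operator $\Delta-\lambda^{2}$: the average $M_{u}(x,r)$ of $u$ over the sphere of radius $r$ centered at $x$ satisfies the radial equation $M''+\tfrac{d-1}{r}M'=\lambda^{2}M$, whence
\begin{equation*}
\frac{1}{\omega_{d-1}}\int_{S^{d-1}}u(x+r\xi)\,d\sigma(\xi)=u(x)\,j(\lambda r),\qquad j(s):=\Gamma\!\left(\tfrac{d}{2}\right)\Big(\tfrac{2}{s}\Big)^{d/2-1}I_{d/2-1}(s),
\end{equation*}
where $I_{d/2-1}$ is the modified Bessel function, $j(0)=1$, and $j>0$. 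Since $I_{\nu}(s)\sim e^{s}/\sqrt{2\pi s}$, the kernel obeys $j(\lambda r)\sim C\,r^{(1-d)/2}e^{\lambda r}$ with $C>0$, the exponent $(1-d)/2$ matching (\ref{eqEst1}) exactly. Solving for $u(x)$ and letting $r\to\infty$, the numerator is $o(r^{(1-d)/2}e^{\lambda r})$ while the denominator grows like $C\,r^{(1-d)/2}e^{\lambda r}$, so $u(x)=0$. Applying this with $\lambda=2\pi|n|$ annihilates $c_{n}$ for all $n\neq 0$ --- the case $|n|=1$ being precisely the borderline at which the growth $|y|^{(1-d)/2}e^{2\pi|y|}$ is attained by the radial solution, which is why the estimate (\ref{eqEst1}) is stated with little-$o$ at this sharp rate. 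Hence $w(t,y)=c_{0}(y)=:r(y)$, completing the proof.

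The main obstacle I anticipate is the Liouville theorem itself --- specifically, establishing the modified-Helmholtz mean value identity and extracting the sharp asymptotics $j(\lambda r)\sim C\,r^{(1-d)/2}e^{\lambda r}$ of the Bessel kernel, since it is exactly this rate against which the hypothesis (\ref{eqEst1}) is calibrated. The Fourier expansion and the integration-by-parts computation producing the Yukawa equation are routine by comparison, as is the final appeal to completeness of the trigonometric system.
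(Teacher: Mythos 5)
Your proposal is correct and follows essentially the same route as the paper: expand the $1$-periodic difference $w=h_1-h_2$ in a Fourier series in $t$, show via integration by parts that each coefficient solves $\Delta_y c_n=(2\pi n)^2 c_n$, and kill the nonzero modes by a Liouville-type theorem for the modified Helmholtz equation calibrated exactly to the growth rate $|y|^{(1-d)/2}e^{2\pi|y|}$. The only difference is that the paper simply cites the classical result of Friedman (going back to Vekua and Rellich) for that last step, whereas you sketch a self-contained proof via the spherical mean-value identity with the modified Bessel kernel $I_{d/2-1}$; your sketch is a valid derivation of the cited fact.
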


\begin{proof}
Define $h\left( t,y\right) :=h_{1}\left( t,y\right) -h_{2}\left( t,y\right) $.
Then $h$ is periodic in $t$ with period $1$,
since $h_1$ and $h_2$ each solve the difference equation (\ref{eq:diff}).
Let us also define $H\left( t,y\right) :=H_{y}\left( t\right) :=h\left( t/(2\pi) ,y/(2\pi) \right) $ for $y\in \mathbb{R}^{d},$ $t\in \mathbb{R}.$ 
Then $H_{y}$ is a $2\pi $-periodic function in $t$,
so it has a Fourier series $\sum_{k=-\infty}^{\infty }a_{k}\left( y\right) e^{ikt}.$ 
Applying the Laplace operator to
the Fourier coefficients 
\begin{equation}
a_{k}\left( y\right) =\frac{1}{2\pi }\int_{0}^{2\pi }H_{y}\left( t\right)
e^{ikt}dt,  \label{eqFourier}
\end{equation}%
we have 
\begin{equation*}
\Delta _{y}a_{k}\left( y\right) =\frac{1}{2\pi }\int_{0}^{2\pi }\Delta
_{y}H\left( t,y\right) e^{ikt}dt.
\end{equation*}

Since $H\left( t,y\right) $ is harmonic we know that 
\begin{equation*}
\Delta _{y}H\left( t,y\right) =-\frac{\partial ^{2}}{\partial t^{2}}H\left(
t,y\right) .
\end{equation*}%
Integration by parts then shows that 
\begin{equation*}
\Delta _{y}a_{k}\left( y\right) =-\frac{1}{2\pi }\int_{0}^{2\pi }\frac{%
\partial ^{2}H}{\partial t^{2}}\left( t,y\right) \cdot
e^{ikt}dt=k^{2}a_{k}\left( y\right) .
\end{equation*}%
Hence $a_{k}\left( y\right) $ is a solution of the Helmholtz equation $%
\Delta _{y}a_{k}=k^{2}a_{k}.$ In view of (\ref{eqFourier}) the estimate (\ref%
{eqEst1}) carries over to $a_{k}(y)$.  Since $k^{2}>0$, a classical
result \cite[p. 228]{Frie56}, going back to work of I. Vekua and F. Rellich
in the 1940's, yields that $a_{k}=0$ for all $k\neq 0$.
\end{proof}

\end{document}